\newcommand{\Real}{\mathrm{\mathbb{R}}}
\DeclareMathOperator{\Susp}{Susp}
\newcommand{\bb}{\mathbb}
\newcommand{\sphere}{\mathrm{\mathbb{S}}}
\theoremstyle{plain} 
\newtheorem{theorem}{\indent\sc Theorem}[section]
\newtheorem{sl_theorem}{\indent\sc Slice Theorem}[section]
\newtheorem{lemma}[theorem]{\indent\sc Lemma}
\newtheorem{corollary}[theorem]{\indent\sc Corollary}
\newtheorem{proposition}[theorem]{\indent\sc Proposition}
\theoremstyle{definition} 
\newtheorem{definition}[theorem]{\indent\sc Definition}
\newtheorem{remark}[theorem]{\indent\sc Remark}
\newtheorem{example}[theorem]{\indent\sc Example}
\begin{document}

\title[Three-dimensional Alexandrov spaces]{Closed three-dimensional Alexandrov spaces with isometric circle actions} 

\author[J. N\'u\~nez-Zimbr\'on]{Jes\'us N\'u\~nez-Zimbr\'on $^*$} 


\subjclass[2010]{ 
Primary 53C23; Secondary 57M60, 57S25.
}
%
\keywords{ 
$3$-manifold, circle action, Alexandrov space.
}
\thanks{ 
$^{*}$ The author was partially supported by PAEP-UNAM and DGAPA-UNAM under project PAPIIT IN113713.
}
\address{
Departamento de Matem\'aticas \endgraf
Facultad de Ciencias, UNAM \endgraf
DF 04510 \endgraf
Mexico
}
\email{nunez-zimbron@ciencias.unam.mx}


\begin{abstract}
We obtain a topological and weakly equivariant classification of closed three-dimensional Alexandrov spaces with an effective, isometric circle action. This generalizes the topological and equivariant classifications of Raymond \cite{R} and Orlik and Raymond \cite{OR} of closed three-dimensional manifolds admitting an effective circle action. As an application, we prove a version of the Borel conjecture for closed three-dimensional Alexandrov spaces with circle symmetry.
\end{abstract}

\maketitle

\section{Introduction and results}

Alexandrov spaces (of curvature bounded below) appear naturally as generalizations of Riemannian manifolds of sectional curvature bounded below. Many results for Riemannian manifolds admit suitable generalizations to the Alexandrov setting and this class of metric spaces has been studied from several angles, including recently the use of transformation groups \cite{GG,GS,HS}.

Considering spaces with non-trivial isometry groups has been a fruitful avenue of research in Riemannian geometry \cite{G,Ko,S}. In Alexandrov geometry, this point of view  has provided information on the structure of Alexandrov spaces. In \cite{Be}, Berestovski\v{\i} showed that finite dimensional homogeneous metric spaces with a lower curvature bound are Riemannian manifolds. Galaz-Garcia and Searle studied in \cite{GS} Alexandrov spaces of cohomogeneity one (i.e.~those with an effective isometric action of a compact Lie group whose orbit space is one-dimensional) and classified them in dimensions at most $4$. In this paper, we classify the effective, isometric circle actions on closed, connected Alexandrov $3$-spaces, thus completing the classification of closed Alexandrov spaces, of dimension at most three, admitting an isometric action of a compact, connected Lie Group. A motivation for considering this problem is the general philosophy that, as in the Riemannian setting, in order to understand Alexandrov spaces, it is natural to study first those with a high degree of symmetry. In the particular instance of Alexandrov $3$-spaces, it seems that a complete understanding of the full class is yet out of reach (cf. Remark 4.3 in \cite{GG1}). However, in the presence of an isometric circle action, Alexandrov $3$-spaces gain several properties that make them more tractable.

In the topological category, Raymond obtained an equivariant classification of the effective actions of the circle on any closed, connected topological $3$-manifold \cite{R}. The orbit space of such an action is a topological $2$-manifold, possibly with boundary. Raymond proved that there is a complete set of invariants that determines each equivariant homeomorphism type:

\begin{theorem}[Raymond \cite{R}]
\label{TH:RAYMOND}
The set of all inequivalent (up to weakly equivariant homeomorphism) effective, isometric circle actions on a closed, connected topological $3$-manifold is in one-to-one correspondence with the set of unordered tuples 
\[
 (b ; (\varepsilon, g, f, t), \{ (\alpha_1, \beta_1), \ldots, (\alpha_n, \beta_n)  \} ).  
 \]
\end{theorem}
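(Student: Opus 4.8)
The plan is to run the classical \emph{Seifert-type} analysis, upgraded to allow the degenerations produced by fixed orbits. Write $\pi\colon M\to M^*:=M/S^1$ for the orbit map. First I would apply the slice theorem to list the orbit types: in dimension $3$ a non-fixed orbit is a circle $S^1/\mathbb{Z}_k$ with a $2$-disk slice on which the finite cyclic stabilizer $\mathbb{Z}_k$ acts orthogonally, and orthogonal $\mathbb{Z}_k$-actions on $D^2$ are either rotations (yielding a fibered solid torus: a \emph{principal} orbit if $k=1$, an \emph{exceptional} orbit if $k\geq 2$) or, for $k=2$, a reflection (a \emph{special exceptional} orbit); a fixed orbit carries a $3$-disk slice on which $S^1$ rotates a $2$-plane, so the fixed-point set $F$ is a disjoint union of circles, never isolated points. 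Patching these local pictures shows $M^*$ is a compact connected topological surface, possibly with boundary, whose boundary circles are each made up entirely of images of fixed points (call these $F$-circles) or entirely of images of special exceptional orbits ($SE$-circles), while the images of exceptional orbits are finitely many isolated interior points.

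From this I would extract the invariants. The surface $M^*$ has an orientability type and a genus; together with whether $M$ itself is orientable these combine into the symbol $\varepsilon$ and the integer $g$. Let $f$ and $t$ be the numbers of $F$-circles and $SE$-circles, and let $(\alpha_1,\beta_1),\dots,(\alpha_n,\beta_n)$ be the Seifert pairs of the $n$ exceptional orbits, read off from their fibered-solid-torus models and normalized (via $\gcd(\alpha_i,\beta_i)=1$ and $0<\beta_i<\alpha_i$, absorbing integer parts elsewhere). Deleting from $M^*$ small disks around the exceptional points and open collars of all boundary circles leaves a surface $M_0^*$ over which $\pi$ is a genuine principal $S^1$-bundle; trivializing this bundle and measuring the twisting of the re-gluing maps along the deleted pieces yields, after the normalizations above, a single integer $b$. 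This defines the map from actions to tuples $(b;(\varepsilon,g,f,t),\{(\alpha_i,\beta_i)\})$.

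Next I would prove the correspondence is a bijection. For injectivity: a weakly equivariant homeomorphism $M_1\to M_2$ descends to a homeomorphism $M_1^*\to M_2^*$ that preserves the orbit-type stratification, hence preserves $\varepsilon$, $g$, $f$, $t$ and the unordered multiset $\{\alpha_i\}$; keeping track of the induced automorphism of $S^1$ — which for a \emph{weakly} equivariant map may be inversion $z\mapsto\bar z$ — and of the possible reversals of orientation of $M$ and of $M^*$ shows that $\{(\alpha_i,\beta_i)\}$ and $b$ are preserved exactly up to the ambiguities already built into the statement. Combined with a uniqueness statement for the decomposition of $M$ into the above standard equivariant pieces (which, because $M^*$ is $2$-dimensional, reduces to isotopy statements in surfaces), this gives that equivalent actions have the same tuple and that distinct tuples are inequivalent. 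For surjectivity I would reverse the construction: from a tuple, build the surface of type $(\varepsilon,g)$, remove $n$ disks and $f+t$ collars, form the principal $S^1$-bundle prescribed by $b$, and glue back fibered solid tori with invariants $(\alpha_i,\beta_i)$ over the disks, copies of the rotation model $S^1\times D^2$ over the $F$-collars, and copies of the reflection model over the $SE$-collars; one then checks that the orientability of the result is the one recorded by $\varepsilon$ and that the outcome is independent of the gluing choices, so this is inverse to the previous map.

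The main obstacle is the combination of the $b$-bookkeeping and the injectivity argument: one must pin down precisely how inversion of $S^1$ and reversal of orientation of $M$ or of $M^*$ act on the pair $(b,\{\beta_i\})$, so that the set of ``normal-form moves'' on a presentation is accounted for exactly by passing to an \emph{unordered} tuple — no more and no less. Put differently, the real content is the \emph{completeness} of the chosen invariants, and this in turn rests on a careful equivariant-decomposition (uniqueness) theorem for $S^1$-actions on $3$-manifolds, which is where most of Raymond's work lies; the enumeration of local models and the realization construction, by contrast, are comparatively routine once the right standard pieces have been identified.
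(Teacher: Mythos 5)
Your outline is a faithful reconstruction of Raymond's original argument (local orbit-type analysis via slices, the weighted orbit surface, the cross-section/obstruction bookkeeping for $b$, and the realization step), which is precisely the source the paper cites for this theorem without reproving it. The one normalization worth making explicit is that when $f+t>0$ the principal part over the bounded surface is automatically trivial, so $b=0$ in that case; otherwise your sketch matches the cited proof in approach and substance.
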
 
Here, $b$ is the obstruction for the principal stratum of the action to be a trivial principal $S^1$-bundle. The symbol $\varepsilon$ takes two possible values, corresponding to the orientability of the orbit space. The genus of the orbit space is denoted by $g$. The number of connected components of the fixed point set is denoted by $f$, while $t$ is the number of $\mathbb{Z}_2$-isotropy  connected components. The  pairs $\{(\alpha_i,\beta_i)\}^{n}_{i=1}$ are the Seifert invariants associated to the exceptional orbits of the action, if any. 

Raymond also proved that the invariants in Theorem \ref{TH:RAYMOND} determine the manifold's prime decomposition when $f>0$. The topological classification without the restriction that $f>0$ was obtained by Orlik and Raymond in \cite{OR} (see also \cite{O}). 

The classification presented herein is an extension of the work of Orlik and Raymond to the class of closed, connected Alexandrov $3$-spaces. As opposed to a closed $3$-manifold, a closed Alexandrov $3$-space $X$ may have topologically singular points, i.e. points whose space of directions is homeomorphic to the real projective plane $\Real P^2$ (see Section \ref{S:Orbit_types}). By Perelman's work \cite{Pe}, the set of such points is discrete, and by compactness, finite. To account for these points, we add an additional set of invariants to those of Raymond: an unordered $s$-tuple $(r_1, r_2, \ldots, r_s)$ of even positive integers. The integer $s$ corresponds to the number of boundary components in the orbit space that contain orbits of topologically singular points. The integers $r_i$ correspond to the number of topologically singular points in the $i$th boundary component of the orbit space with orbits of topological singularities. If there are no topologically singular points we consider this $s$-tuple to be empty. With these definitions in hand, we may now state our main result. We let $\Susp(\Real P^2)$ denote the suspension of $\Real P^2$. 

\begin{theorem}
\label{teoprinc}
Let $S^1$ act effectively and isometrically on a closed, connected Alexandrov $3$-space $X$. Assume that $X$ has $2r$  topologically singular points, $r\geq0$. Then the following hold:
\begin{enumerate} 
	\item The set of inequivalent (up to weakly equivariant homeomorphism) effective, isometric circle actions on $X$ is in one-to-one correspondence with the set of unordered tuples
\[
	\left( b; (\varepsilon, g,f,t);\{ (\alpha_i, \beta_i) \}_{i=1}^{n} ; (r_1,r_2, \ldots, r_s) \right)
\]
where the permissible values for $b$, $\varepsilon$, $g$, $f$, $t$ and $\{ (\alpha_i,\beta_i) \}_{i=1}^n$, are the same as in Theorem \ref{TH:RAYMOND} and $(r_1,r_2,\ldots, r_s)$ is an unordered $s$-tuple of even positive integers $r_i$ such that $r_1 + \ldots+r_s =2r$.
	\item $X$ is weakly equivariantly homeomorphic to 
\[ 
M\# \underbrace{\Susp(\bb{R}P^2)\#\cdots \#\Susp(\bb{R}P^2) }_{r}
\]	
 where $M$ is the closed $3$-manifold given by the set of invariants
\[
  \left( b; (\varepsilon, g,f+s,t);\{ (\alpha_i, \beta_i) \}_{i=1}^{n} \right) 
\]
in Theorem~\ref{TH:RAYMOND}.
\end{enumerate}
\end{theorem}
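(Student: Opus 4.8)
The plan is to reduce the classification to the work of Raymond and Orlik--Raymond (Theorem~\ref{TH:RAYMOND}) by excising the topological singularities, realizing the passage between $X$ and the manifold $M$ by an equivariant connected sum with copies of $\Susp(\mathbb{R}P^2)$. The first step is to understand the orbit space $X^\ast = X/S^1$. Since the quotient of an Alexandrov space by an isometric action of a compact group is again an Alexandrov space, $X^\ast$ is a two-dimensional Alexandrov space, hence a compact topological surface, possibly with boundary. Combining the slice theorem for isometric actions with the facts---from Perelman's conical neighborhood theorem and the classification of closed positively curved Alexandrov surfaces---that each topologically singular point $p$ is fixed, that $\Sigma_p X$ is homeomorphic to $\mathbb{R}P^2$, and that a small invariant neighborhood of $p$ is equivariantly homeomorphic to the cone $K(\mathbb{R}P^2)$, one identifies the strata of $X^\ast$. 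Its interior is the image of the principal orbits together with finitely many isolated exceptional orbits, which yield the Seifert invariants $(\alpha_i,\beta_i)$ and the obstruction $b$ exactly as in the manifold case (these orbits lie in the manifold part of $X$). Its boundary $\partial X^\ast$ is a disjoint union of circles, each of which is either entirely the image of $\Fix(S^1)$, or entirely the image of a two-dimensional $\mathbb{Z}_2$-isotropy set (these two types being counted by $f$ and $t$), or a \emph{singular circle} along which arcs of fixed points alternate with arcs of $\mathbb{Z}_2$-isotropy, the junctions being precisely the images of topologically singular points. The last possibility relies on the uniqueness up to conjugacy of the effective $S^1$-action on $\mathbb{R}P^2$: $\chi(\mathbb{R}P^2)=1$ forces $\Fix(S^1)$ to be a single point, and lifting to $S^2$ identifies the other end of the orbit interval with a single orbit carrying $\mathbb{Z}_2$-isotropy, so that $K(\mathbb{R}P^2)/S^1 \cong K([0,1])$ is a half-disk whose cone point is a boundary point lying between a fixed-point arc and a $\mathbb{Z}_2$-arc.

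Next, removing small invariant cone neighborhoods of the $N$ topologically singular points produces a compact topological $3$-manifold $X_0$ carrying an $S^1$-action, with $\partial X_0$ a disjoint union of $N$ copies of $\mathbb{R}P^2$; since the boundary of a compact $3$-manifold has even Euler characteristic by Lefschetz duality, $N=2r$. On each singular circle the alternation of arcs forces an even number $r_i$ of topologically singular points, with $\sum_i r_i = 2r$; letting $s$ be the number of singular circles gives the tuple and all of the numerical constraints in part (1). The invariants $b,\varepsilon,g,f,t$ and $\{(\alpha_i,\beta_i)\}$ are intrinsic and, as in \cite{R,OR}, govern the manifold part of the action, while the remaining entries $(r_1,\dots,r_s)$ record how the topological singularities distribute among the singular circles; that a given admissible tuple is realized and determines $X$ up to weak equivariant homeomorphism will follow from the reconstruction below.

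For part (2), fix one $\mathbb{Z}_2$-arc $B$ on a singular circle, with endpoints the topologically singular points $p,p'$ that bound it, and let $\pi\colon X\to X^\ast$ denote the orbit map. Choose an embedded arc $\delta\subset X^\ast$ with both endpoints on the fixed-point arcs adjacent to $B$, bounding together with the sub-arc of the singular circle through $p$, $B$, $p'$ a disk $D\subset X^\ast$ that meets no other singular or exceptional orbit. Then $\pi^{-1}(\delta)$ is an invariant $2$-sphere---the two-point suspension of a principal orbit, with two fixed points over the endpoints of $\delta$---which splits $X$ into the piece $V=\pi^{-1}(D)$ containing $p$ and $p'$ and its complement. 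The weighted orbit space $V^\ast = D$ is a disk whose boundary consists of $\delta$, a fixed-point arc, the point $p$, the arc $B$, the point $p'$, and a fixed-point arc; this is precisely the weighted orbit space of $\Susp(\mathbb{R}P^2)$ with an invariant ball about a manifold fixed point removed, and reconstructing the $S^1$-space from this data---via the Orlik--Raymond procedure on the manifold part together with the uniqueness of the action on the cone piece $K(\mathbb{R}P^2)$---gives $V$ weakly equivariantly homeomorphic to $\Susp(\mathbb{R}P^2)\setminus\mathrm{int}(D^3)$. Carrying this out for each of the $r$ such $\mathbb{Z}_2$-arcs and capping the resulting invariant $2$-spheres with invariant balls $D^3 = K(S^2)$ exhibits $X$ as weakly equivariantly homeomorphic to $M \# \Susp(\mathbb{R}P^2) \# \cdots \# \Susp(\mathbb{R}P^2)$ ($r$ summands) with $M$ closed; tracing through the orbit space, each singular circle becomes an ordinary fixed-point circle while all remaining invariants are unchanged, so $M$ carries the data $(b;(\varepsilon,g,f+s,t);\{(\alpha_i,\beta_i)\}_{i=1}^{n})$. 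Conversely, starting from the Orlik--Raymond manifold with these invariants and performing the connected sums so as to distribute the singularities according to $(r_1,\dots,r_s)$ realizes any admissible tuple, which completes the bijection in part (1).

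The main obstacle is the equivariant local analysis at the topologically singular points: promoting Perelman's a priori non-equivariant conical neighborhood theorem to an $S^1$-equivariant homeomorphism onto $K(\mathbb{R}P^2)$ with the standard action. This amounts to pinning down the $S^1$-action on $\Sigma_p X\cong\mathbb{R}P^2$ up to conjugacy and showing that the action on a cone neighborhood of $p$ is determined by its restriction to the space of directions. A secondary technical point is verifying that the Raymond--Orlik reconstruction, extended so that the cone pieces $K(\mathbb{R}P^2)$ are inserted along junctions of fixed-point and $\mathbb{Z}_2$-arcs, assembles into a well-defined bijection.
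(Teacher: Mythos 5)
Your overall strategy coincides with the paper's: analyze the weighted orbit space, show the topologically singular points sit in pairs at junctions of fixed-point and $\mathbb{Z}_2$-arcs on boundary circles, split $X$ along invariant $2$-spheres into a closed manifold $M$ with invariants $(b;(\varepsilon,g,f+s,t);\{(\alpha_i,\beta_i)\})$ plus $r$ copies of $\Susp(\mathbb{R}P^2)$, and then quote Raymond for the manifold part. The orbit-space analysis, the parity argument, and the bookkeeping $f\mapsto f+s$ are all correct and match Sections~3 and~5 of the paper. However, the step you describe as ``reconstructing the $S^1$-space from this data \dots gives $V$ weakly equivariantly homeomorphic to $\Susp(\mathbb{R}P^2)\setminus\mathrm{int}(D^3)$'' is precisely the content that has to be proved, and it is the bulk of the paper's work; the Orlik--Raymond procedure you invoke is a theorem about manifolds and does not apply across the cone pieces $K(\mathbb{R}P^2)$ without a new argument. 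Concretely, one must show that a weighted orbit space containing junction points determines the total space up to weak equivariant homeomorphism, and that the equivariant attachment of $K(\mathbb{R}P^2)$ along its boundary $\mathbb{R}P^2$ is unique up to equivariant homeomorphism of the assembled piece. You flag both issues in your final paragraph but do not resolve them, so as written the argument is circular at its key point: part~(1) is deduced from ``the reconstruction below,'' and the reconstruction is deduced from a uniqueness statement that is part~(1) restricted to the piece $V$.

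The paper closes this gap by constructing an explicit global cross-section to the orbit map. First (Theorem~\ref{TH:CROSS_SECTION}) it treats the case where $X^*$ is a disk with no exceptional orbits: a section over the principal part $P$ (a trivial bundle since $P^*$ is contractible) is extended by hand over invariant tubular neighborhoods of $RF$ and $SE$ (using the Slice Theorem models $D^2\times I$ and $\sphere^1\times_{\mathbb{Z}_2}D^2$) and then over the conical neighborhoods of the singular points by coning the boundary curve level by level; induction on $r$ handles more singular points by cutting along an invariant $2$-sphere and capping with equivariant $3$-balls. The existence of matching cross-sections is what makes ``isomorphic weighted orbit spaces imply weakly equivariantly homeomorphic total spaces'' (Corollary~\ref{COR:HOM_EQU}) a one-line argument, and Proposition~\ref{PROP:CROSS_SEC_GRL} then glues the manifold cross-section of Orlik--Raymond to these singular pieces. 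Your local analysis at a singular point (the half-disk $K(\mathbb{R}P^2)/S^1$ with the cone point separating an $F$-arc from a $\mathbb{Z}_2$-arc) is the right input, and the equivariant conical neighborhood statement you defer is handled in the paper either by the Alexandrov Slice Theorem together with the linearity of actions on $\mathbb{R}P^2$, or by passing to the branched orientable double cover and applying Livesay's theorem. Without the cross-section (or an equivalent uniqueness-of-gluing argument), the proposal does not yet constitute a proof.
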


The circle actions on the spaces $M\#\Susp(\bb{R}P^2)\#\ldots \#\Susp(\bb{R}P^2)$ in Theorem 1.2 are given in terms of an equivariant connected sum, which we construct in Section 3.1. It will follow from the construction that the actions are isometric with respect to some invariant Alexandrov metric. We point out that in the case that $X$ has no topologically singular points, i.e. $r=0$, $X$ is a topological manifold and Theorem 1.2 follows from the work of Raymond \cite{R}.

We also count the number of inequivalent effective, isometric circle actions on a closed, connected Alexandrov $3$-space $X$, by using that of the manifold $M$ appearing in (2) of Theorem \ref{teoprinc} (see Remark \ref{R:COUNT_ACTIONS}). 

We point out that, by Theorem \ref{teoprinc}, a closed, non-manifold Alexandrov $3$-space $X$ with an effective and isometric circle action has the form $M\#\Susp(\bb{R}P^2)\#\cdots\#\Susp(\bb{R}P^2)$. Therefore, the Seifert-Van Kampen Theorem and the resolution of the Poincar\'e conjecture imply that the only simply-connected, closed Alexandrov $3$-spaces with an effective, isometric circle action are the $3$-sphere $\mathbb{S}^3$ and connected sums of finitely many copies of $\Susp(\bb{R}P^2)$. On the other hand, Galaz-Garcia and Guijarro showed in \cite{GG1}, without any symmetry assumptions, that there are examples of simply-connected, closed Alexandrov $3$-spaces with topologically singular points which are not homeomorphic to connected sums of copies of $\Susp(\bb{R}P^2)$. One such example is the space $X$ obtained by taking the quotient of a flat $3$-torus $T^3$ by the involution $\iota:T^3\to T^3$ given by complex conjugation on each $\bb{S}^1$ factor. If $X$ were homeomorphic to a connected sum of copies of $\bb{S}^3$ and $\Susp(\bb{R}P^2)$, then it would admit a Riemannian orbifold metric having positive scalar curvature. This implies that the pullback metric (with respect to the canonical projection $T^3\to T^3/\iota$) on $T^3$ also has positive scalar curvature which contradicts results of Schoen and Yau (cf. Remark 4.3 in \cite{GG1}). 

Finally, we remark that closed Alexandrov $3$-spaces with an effective, isometric circle action fall within the class of collapsed Alexandrov $3$-spaces, considered by Mitsuishi and Yamaguchi in \cite{MY}. In our case, the collapse occurs along the orbits of the action and we obtain a more refined topological classification than the one in Section 5 of \cite{MY} by harnessing the presence of the circle action. 

Our paper is organized as follows. In Section~\ref{S:EQUIV_ALEX_GEOM} we recall some basic results on the geometry of isometric actions of compact Lie groups on Alexandrov spaces. In Section~\ref{S:Orbit_types} we give the topological structure of the orbit space of a closed, connected Alexandrov $3$-space with an effective, isometric circle action. We assign weights to the orbit space with isotropy information. Section~\ref{S:TOP_EQU_CLASS_DISK} contains the topological and equivariant classifications of effective, isometric $S^1$ actions on non-manifold closed, connected Alexandrov $3$-spaces, in the special case where there are no exceptional orbits and the orbit space is homeomorphic to a $2$-disk. In Section~\ref{S:CLASS_GENERAL}, we prove Theorem \ref{teoprinc}, obtaining the topological and equivariant classifications without any restrictions. The proofs of the main results in Sections \ref{S:Orbit_types}, \ref{S:TOP_EQU_CLASS_DISK} and \ref{S:CLASS_GENERAL} follow the same strategies as the corresponding results in \cite{R} and \cite{OR}. In Section~\ref{S:Borel}, as an application of Theorem~\ref{teoprinc}, we give a proof of the Borel conjecture for closed, connected Alexandrov $3$-spaces with circle symmetry.

\textbf{Acknowledgements}. This work is part of the author's Ph.D.~thesis. He would like to thank his advisors Fernando Galaz-Garcia and Oscar Palmas. The author would also like to thank Luis Guijarro for bringing the Borel conjecture to his attention, Bernardo Villarreal Herrera for useful conversations, and the Differential Geometry group at the University of M\"unster for its hospitality while part of this work was carried out. The author also thanks the referee for valuable comments on the manuscript.  

\section{Equivariant Alexandrov geometry}
\label{S:EQUIV_ALEX_GEOM}

Let $X$ be a finite-dimensional Alexandrov space. We assume that the reader is familiar with the basic theory of compact transformation groups as well as that of Alexandrov spaces. Basic references for these subjects are \cite{Br} and \cite{BBI,BGP} respectively.

 Fukaya and Yamaguchi showed in \cite{FY} that, as in the Riemannian case, the group of isometries of $X$ is a Lie group. If $X$ is compact then its isometry group is also compact (see ~\cite{DW}). The isometry group of an Alexandrov space has been further investigated in \cite{GG}. We consider isometric actions $G\times X \rightarrow X$  of a compact Lie group $G$ on $X$. We will denote the orbit of a point $x\in X$ by $G(x) \cong G/G_x$, where $G_x= \{ g\in G \ : \ gx= x	  \}$ is the isotropy subgroup of $x$ in $G$. The closed subgroup of $G$ given by $\cap_{x\in X} G_x$ is called the \textit{ineffective kernel} of the action. If the ineffective kernel is trivial, we will say that the action is \textit{effective}. From now on, we will suppose that all the actions we consider are effective. Given a subset $A\subset X$ we denote its image under the canonical projection $\pi: X\rightarrow X/G$ by $A^*$. In particular, $X^*=X/G$. It was proved in \cite{BGP} that the orbit space $X^{*}$ is an Alexandrov space with the same lower curvature bound as $X$. 
 
 We will denote the space of directions of $X$ at a point $x$ by $\Sigma_xX$. Given $A\subset \Sigma_xX$, we define the \textit{set of normal directions to $A$} as 
 \[
A^{\bot} = \{ v\in \Sigma_xX \ : \ d(v,w)= \mathrm{diam}(\Sigma_xX)/2\ \mathrm{for} \ \mathrm{all} \ w\in A   \}.  
 \] 
Let $S_x$ denote the tangent unit space to the orbit $G/G_x$. Galaz-Garcia and Searle proved in \cite {GS} that, if $\dim(G/G_x)>0$ then the set $S_x^\perp$ is a compact, totally geodesic Alexandrov subspace of $\Sigma_xX$ with curvature bounded below by $1$. Moreover, they showed that $\Sigma_xX$ is isometric to the join $S_x * S_x^\perp$ with the standard join metric and that either $S_x^\perp$ is connected or it contains exactly two points at distance $\pi$.

We now recall the Slice Theorem for isometric actions on Alexandrov spaces (see Harvey and Searle \cite{HS}). For a subset $A\subset X$, the metric ball of radius $\varepsilon$ centered on $A$ is denoted by $B_\varepsilon(A)$. The cone of an Alexandrov space $Y$ of $\mathrm{Curv}\geq 1$ is denoted by $K(Y)$ and it is assumed to have the standard cone metric. 

\begin{sl_theorem}[Harvey and Searle \cite{HS}]
\label{TH:SLICE_THEOREM}
Let a compact Lie group $G$ act isometrically on an Alexandrov space $X$. Then for all $x\in X$, there is some $\varepsilon_0>0$ such that for all $\varepsilon<\varepsilon_0$ there is an equivariant homeomorphism 
\[
G\times_{G_x}K(S_x^{\bot}) \rightarrow B_{\varepsilon}(G(x)). 
\]
\end{sl_theorem}

As a consequence of the Slice Theorem, a slice at $x$ is equivariantly homeomorphic to $K(S^{\bot}_x)$. It follows that  $\Sigma_{x*}X^*$, the space of directions at $x^*$ in $X^*$, is isometric to $S^{\bot}_x/G_x$. 
Alexandrov versions of Kleiner's isotropy Lemma and the principal orbit Theorem were proved by Galaz-Garcia and Guijarro in \cite{GG}. 

Let $G$ act isometrically on two Alexandrov spaces $X$ and $Y$. We will say that a mapping $\varphi: X\rightarrow Y$ is \textit{weakly equivariant} if for every $x\in X$ and $g\in G$ there exists an automorphism $f$ of $G$ such that $\varphi(gx)= f(g)\varphi(x)$. We will say that two actions on $X$ are \textit{equivalent} if there exists a weakly equivariant homeomorphism $\varphi:X\rightarrow X$. 

Let $(\Susp(\Real P^2),d_0)$ denote the spherical suspension of the unit round $\Real P^2$.  We will now give an example of an effective, isometric circle action on $\Susp(\Real P^2)$. It will play a central role in our examination of $S^1$-actions on closed, connected Alexandrov $3$-spaces. We will show in Section \ref{S:TOP_EQU_CLASS_DISK} that this is the only circle action that can occur on $(\Susp(\Real P^2),d_0)$ up to equivalence.

\begin{example}
\label{EX:STANDARD_ACTION}
We will say that the suspension of the standard cohomogeneity one circle action on the unit round $\Real P^2$ is the \emph{standard circle action} on $\Susp(\Real P^2)$. We will describe this action explicitly. Let $D^2$ be the unit disk in the plane with polar coordinates $(r,\theta)$. We identify the points of the form $(1,\theta)$ with $(1, \theta + \pi)$. Then each point in $\Real P^2$ is an equivalence class $[r,\theta]$ where $(r,\theta)\in D^2$. Therefore, the points of $\Susp(\Real P^2)$ are equivalence classes $[[r,\theta], t]$ with $[r,\theta]\in \Real P^2$ and $0\leq t\leq 1$. Now, for every $0\leq \varphi\leq 2\pi$ the standard action is given by $\varphi \cdot [[r,\theta], t] := [[r,\theta + \varphi], t]$. 
\end{example}

\section{Orbit types and orbit space}
\label{S:Orbit_types}

Let $X$ be a closed, connected  Alexandrov $3$-space with an effective, isometric $S^1$-action. We will only consider the case were the action has a non-empty set of fixed points. It will follow from our discussion below that closed Alexandrov $3$-spaces with effective, isometric circle actions without fixed points are topological manifolds, and this case has been completely classified by Orlik and Raymond \cite{OR,R}. In this section we will determine the topological structure of the orbit space $X^{*}$. We will also assign weights to its points with isotropy information.  

We call a point $x$ in $X$ \textit{topologically regular} if $\Sigma_xX$ is homeomorphic to $\sphere^2$ and \textit{topologically singular} if $\Sigma_xX$  is homeomorphic to $\Real P^2$. Let $SF$ be the set of topologically singular points of $X$. Observe that, since the action is isometric, singular points are mapped to singular points by the elements of $S^1$. By a theorem of Perelman (Theorem~0.2~in~\cite{Pe}), the codimension of the set of topologically singular points is at least 3. The compactness of $X$ then implies that $SF$ is a finite set. 

We have different orbit types according to the possible isotropy groups of the action. These groups are the trivial subgroup $\{e\}$, the cyclic subgroups $\mathbb{Z}_k$, $k\geq 2$ and $S^1$ itself. Therefore, orbits in $X$ are either $0$-dimensional or $1$-dimensional.  
This observation and the finiteness of $SF$ imply that topologically singular points are fixed by the action. We let $F$ be the set of fixed points of the action and $RF=F\setminus SF$. The points whose isotropy is not $S^1$ are topologically regular, therefore we can talk about a local orientation. We will say that an orbit with isotropy $\mathbb{Z}_k$ acting without reversing the local orientation is \textit{exceptional}; we will denote the set of points on exceptional orbits by $E$. An orbit with isotropy $\mathbb{Z}_2$ that acts reversing the local orientation will be called \textit{special exceptional} and the set of points on such orbits will be denoted by $SE$. The orbits with trivial isotropy will be called \textit{principal}. We collect the definitions of the orbit types in  Table \ref{TBL:ORBIT_TYPES}.

\begin{table}[hb]
\centering
\resizebox{\textwidth}{!}{%
\begin{tabu}{|c|c|c|c|}
\hline
 Orbit Type &  Notation& Isotropy & Space of Directions \\ \hline
Principal & $P$ & $\{e\}$ & \multirow{4}{*}{$\mathbb{S}^2$} \\ \tabucline[1pt]{1-3}
Exceptional & $E$ & \makecell{$\mathbb{Z}_k$ \\ (action preserves local orientation)} &  \\ \tabucline[1pt]{1-3}
Special exceptional & $SE$ & \makecell{$\mathbb{Z}_2$ \\ (action reverses local orientation)} &  \\ \tabucline[1pt]{1-3}
Topologically regular fixed points & $RF$ & $S^1$ &  \\ \tabucline[1pt]{-}
 Topologically singular fixed points & $SF$ & $S^1$ & $\mathbb{R}P^2$ \\ \tabucline[1pt]{-}
Fixed Points ($RF\cup SF$) & $F$ & $S^1$ & $\mathbb{S}^2$ or $\mathbb{R}P^2$ \\ \hline
\end{tabu}
}
\caption{Orbit types of an effective, isometric $S^1$ action on $X$.}
\label{TBL:ORBIT_TYPES}
\end{table}
We now investigate the topological structure of $X^{*}$. A small neighborhood of $x^{*} \in X^{*}$ is homeomorphic to $B_{\varepsilon}(x)^{*}$. By the conical neighborhood theorem of Perelman (Theorem~0.1~in~\cite{Pe}, see also Theorem 6.8 in \cite{K}), $B_{\varepsilon}(x)^{*}$ is homeomorphic to $K(\Sigma_{x^{*}}X^{*})$. Then, Theorem ~\ref{TH:SLICE_THEOREM} implies that $B_{\varepsilon}(x)^{*}$ is homeomorphic to $K(S_x^{\bot}/G_x)$. For a point $x^{*}\in SF^{*}$ this means that $B_{\varepsilon}(x)^{*}$ is homeomorphic to $K(\Real P^2/S^1)$. An action by homeomorphisms on $\Real P^2$ is equivalent to a linear action \cite{M,N}, therefore $\Real P^2/S^1$ is a closed interval with principal isotropy in the interior, $\mathbb{Z}_2$-isotropy at one endpoint and $S^1$-isotropy at the other endpoint. It follows that $x^{*}$ is the common endpoint of two arcs contained in the boundary of $X^{*}$. One of these arcs is contained in $SE^{*}$ and the other is contained in $F^{*}$. The topological structure of $X^*$ near topologically regular points is given in Lemma 1 of \cite{R}, which we now recall.

\begin{lemma}[Raymond \cite{R}]
The orbit space of $M^*$ of an effective action of a circle on a $3$-manifold $M$ is a $2$-manifold with boundary $F^*\cup SE^*$. Furthermore all orbits near $E^*$, $F^*$ or $SE^*$ are principal orbits.
\end{lemma}

 The orbit space $X^{*}$ is  weighted with isotropy information,  which we detail now. Let $C^*$ be a boundary component of $X^{*}$. We have the following three possibilities: $C^{*} \subseteq RF^{*}$, $C^{*}\subseteq SE^{*}$, or $C^{*}\cap SF^{*} \neq \emptyset$. The last possibility implies that $C^{*}\subseteq F^{*}\cup SE^{*}$ and that $C^{*}$ intersects $F^{*}$ and $SE^{*}$ non-trivially. The interior of $X^{*}$ is composed of principal orbits and $E^*$. A generic orbit space is shown in Figure ~\ref{FIG:GENERIC_ORBIT_SPACE}.  In this Figure, the interior of the manifold consists of principal orbits, except for the two highlighted points of exceptional isotropies $\mathbb{Z}_k$ and $\mathbb{Z}_l$. Along the boundary circles, points on solid lines have $S^1$ isotropy, while the points on dotted lines have $\mathbb{Z}_2$ isotropy. We summarize the previous discussion in the following proposition. 

\begin{proposition}
\label{PROP:ORBIT_SPACE}
Let $S^1$ act effectively and isometrically with $F\neq \emptyset$ on a closed, connected Alexandrov $3$-space $X$. Then the following hold:   

\begin{itemize}
\item[(1)] The orbit space $X^{*}$ is a $2$-manifold with boundary.

\item[(2)] The interior of $X^{*}$ consists of principal orbits except for a finite number of exceptional orbits.

\item[(3)] For each boundary component $C^*$ of $X^*$, one of the following possibilities holds: $C^*\subset RF^*$, $C^*\subset SE^*$ or $C^*\cap SF^*\neq \emptyset$.

\item[(4)] If $C^*\cap SF^*\neq \emptyset$, then $C^*\setminus SF^*$ is a finite union of $r\geq 2$ open intervals $\{I_k\}_{k=2}^r$, with each $I_k$ contained either in $RF^*$ or $SE^*$. 

\item[(5)] If $I_k\subset RF^*$, then $I_{k+1}\subset SE^*$ and if $I_k\subset SE^*$, then $I_{k+1}\subset RF^*$. 
\end{itemize}

\end{proposition}


\begin{figure}
\centering
\def\svgwidth{0.5\columnwidth} 
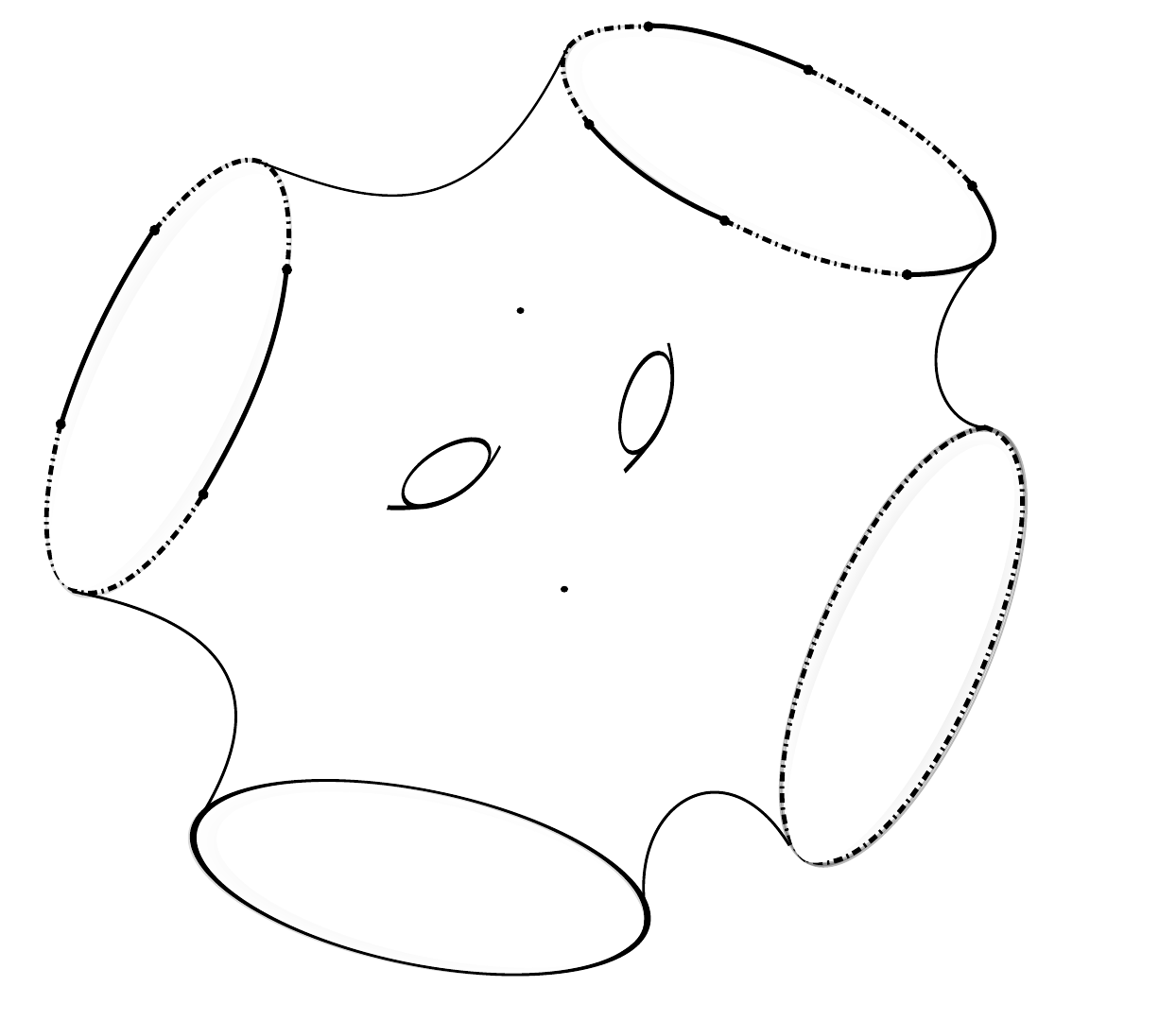
\caption{Example of an orbit space of an isometric circle action on a closed Alexandrov $3$-space. }
\label{FIG:GENERIC_ORBIT_SPACE}
\end{figure} 

We also have the following lemma. 

\begin{lemma}
\label{L:EVEN}
Let $S^1$ act effectively and isometrically on a closed, connected Alexandrov $3$-space $X$. Then $X$ has an even number of topologically singular points.
\end{lemma}

\begin{proof} If $X$ is a topological manifold, then it has $0$ topologically singular points and the result follows trivially. Therefore, we assume that the set of topologically singular points of $X$ is non-empty.

Let $C^*$ be a boundary component of $X^*$, identified with the interval $[0,2\pi]$. Let $P_r = \{0=t_1 <t_2<\ldots <t_r=2\pi \}$ be a partition of $C^*$ such that $[t_i,t_{i+1}]\subseteq F^*$ or $[t_i,t_{i+1}]\subseteq SE^*$ for each $i=1,\ldots, r$. Let $P_{\tilde{r}}$ be a minimal partition satisfying the conditions. Then $\tilde{r}>1$ if and only if $C^*\cap  SF^*\neq \emptyset$. In this case it is clear that $t_i\in SF^*$. We claim that $\tilde{r}$ is an even integer. Suppose $P_{\tilde{r}}$ has an odd number of points. Observe that adjacent intervals in $P_{\tilde{r}}$ cannot be contained both in $F^*$ or in $SE^*$ since that would make their common point superfluous, contradicting the minimality condition on $P_{\tilde{r}}$. 
\end{proof}

We remark that the conclusion of the previous Lemma holds even without the assumption of symmetry, as is observed in \cite{MY}.

We will group the topological and equivariant information of $X^*$ into a set of invariants which we list now. Let $b$ be the obstruction for the principal part of the action to be a trivial principal $S^1$-bundle. The symbol $\varepsilon$, with possible values $o$ or $n$, will stand for orientable and non-orientable $X^*$ respectively. The genus of $X^{*}$ will be denoted by an integer $g\geq0$. We let $f\geq0$ designate the number of boundary components of $X^{*}$ that are contained in $RF^{*}$. Similarly, $t\geq 0$ will stand for the number of boundary components of $X^{*}$ contained in $SE^{*}$. We associate Seifert invariants $(\alpha_i, \beta_i)$ to each exceptional orbit as in \cite{R}, (see also \cite{O}). Let $C^*_1, \ldots, C^{*}_s$ be the boundary components of $X^{*}$ that intersect $SF^*$. We define $r_i$ to be the cardinality of $C^*_i\cap SF^{*}$ for each $i=1,\ldots, s$. Note that $r_i$ is an even integer by Lemma \ref{L:EVEN}. In summary, we associate the following set of invariants to $X^{*}$:
\[ 
\left( b; (\varepsilon, g, f,t); \{ (\alpha_i, \beta_i) \}_{i=1}^{n} ; (r_1,r_2,\ldots, r_s)  \right).  
\]
In the case where $X$ is a manifold, $r_i=0$ for all $i$. The set of invariants in this case coincides with the one defined by Raymond in \cite{R}. The definition of this set of invariants of $X^*$ suggests the following notion of equivalence between orbit spaces. 

\begin{definition}
Let  $S^1$ act effectively and isometrically on two closed, connected Alexandrov $3$-spaces $X$ and $Y$. We will say that their orbit spaces are \textit{isomorphic} if there is a \textit{weight-preserving homeomorphism} $X^{*} \rightarrow Y^{*}$, i.e., a homeomorphism preserving the orbit types and isotropy information of the orbit spaces. If $X^{*}$ and $Y^{*}$ are oriented, we also require the homeomorphism to be orientation-preserving.       
\end{definition}

We present the following result without a proof, as it is straightforward.

\begin{proposition}
Let $S^1$ act effectively and isometrically on two closed, connected Alexandrov $3$-spaces $X$ and $Y$. If $X$ and $Y$ are equivariantly homeomorphic, then $X^{*}$ and $Y^{*}$ are isomorphic. 
\end{proposition}

\section{Topological and equivariant classification when $X^{*}$ is a disk, $E=\emptyset$ and $s\geq 1$}
\label{S:TOP_EQU_CLASS_DISK}
 
We will first focus our attention on the case that $X^*$ is homeomorphic to a $2$-disk without exceptional orbits and at least two orbits of topologically singular points.  This is the simplest orbit space that can arise from a non-manifold Alexandrov space. We remark that these conditions imply that $SF\neq \emptyset$, and, in particular, that $F\neq \emptyset$. The main goal of this section will be to construct a cross-section to the orbit map when there are no exceptional orbits and use it to obtain a topological decomposition of $X$. We follow the same strategy as in the manifold case (see Section 3 in \cite{R} and Lemma 2 of Section 1.9 in \cite{O}). The existence of this cross-section will also yield a weakly equivariant classification of the effective, isometric $S^1$-actions on $X$, as is shown in Corollary \ref{COR:HOM_EQU}. When dealing with arbitrary permissible values for the invariants defined in the last section, the simpler case considered here will play a fundamental role.  Throughout this and the next section the term \textit{cross-section} will be used to refer to both a map $X^* \rightarrow X$ and its image on $X$.    

\begin{theorem}
\label{TH:CROSS_SECTION}
Let $S^{1}$ act effectively and isometrically on a closed, connected Alexandrov $3$-space $X$ that is not a manifold. Assume that there are no exceptional orbits and that $X^{*}$ is homeomorphic to a $2$-disk. Then there exists a cross-section to the orbit map. 
\end{theorem}  

\begin{proof}
Let $2r$ be the number of topologically singular points of $X$. We will proceed by induction on $r$.

We will first assume that $r=1$ and denote the topological singularities by $x^+$ and $x^-$. We will construct a cross-section $X^{*}\rightarrow X$ by decomposing $X$ into subsets admitting cross-sections. By Proposition \ref{PROP:ORBIT_SPACE} the boundary of $X^*$ is the union of two arcs $I_1\subset F^{*}$ and $I_2\subset SE^*$ such that $I_1\cap I_2 = \{(x^+)^*,(x^-)^* \}$. Let $\varepsilon>0$ be small enough so that $B_{\varepsilon}(x^+)$ and $B_{\varepsilon}(x^{-})$ are conical \cite{Pe}. By Theorem \ref{TH:SLICE_THEOREM} we may assume that a tubular neighborhood $U$ of $F\cup SE$ of radius $\varepsilon$ is invariant. Then, $U\setminus \left( B_{\varepsilon}(x^+) \cup B_{\varepsilon}(x^-) \right)$ is an invariant subset of $X$ consisting of two disjoint components. Let $U_{RF}$ and $U_{SE}$ be said components, so that, $U_{RF}^*$ and $U_{SE}^*$ intersect $I_1$ and $I_2$ respectively. Figure \ref{FIG:ORBIT_SPACE_DECOMPOSITION} depicts the induced decomposition on $X^{*}$. Let $\overline{U}$ be the closure of $U$. Observe that $P:= X\setminus \overline{U}$ is contained in the principal stratum of $X$. Furthermore, $P^{*}$ is contractible since it is homeomorphic to an open $2$-disk. Therefore, the restriction of the orbit map to $P$ is a trivial principal $S^1$-bundle. Thus, we have a cross-section $h_P: \overline{P^{*}} \rightarrow \overline{P}$. We will now show that this cross-section can be extended to $U^*$.

\begin{figure}
\centering
\def\svgwidth{0.5\columnwidth}
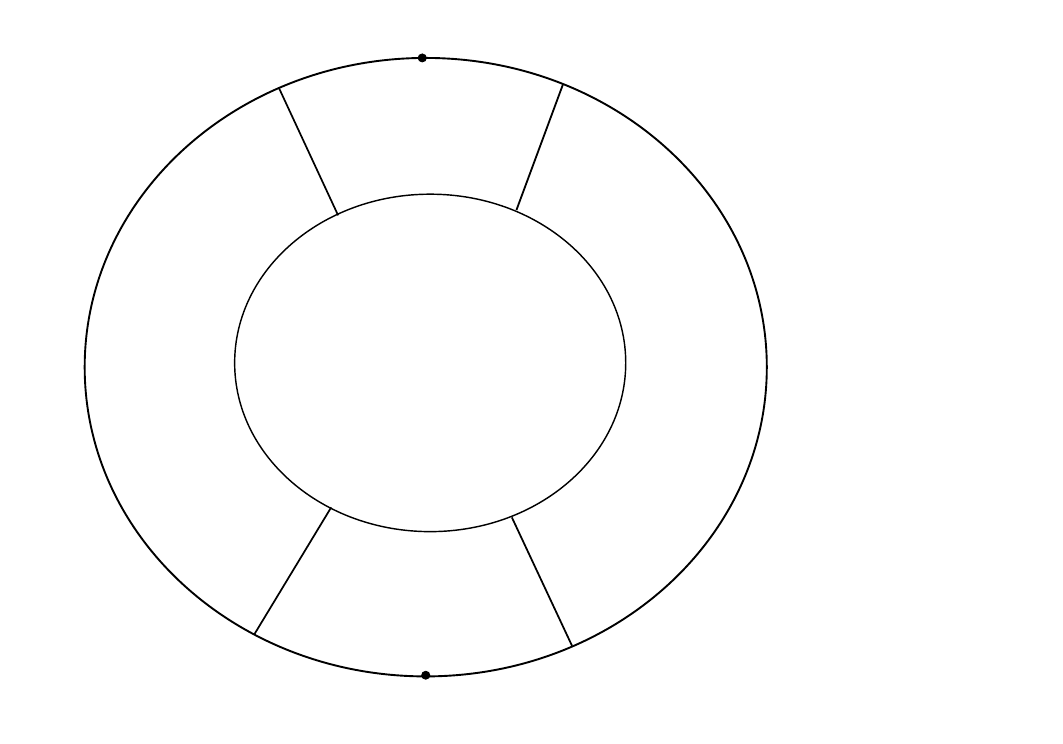
\caption{Decomposition of $X^*$ into neighborhoods with cross-sections.}
\label{FIG:ORBIT_SPACE_DECOMPOSITION}
\end{figure} 

We extend $h_P$ to $U_{RF}^*$ first. By Theorem \ref{TH:SLICE_THEOREM}, $U_{RF}$ is equivariantly homeomorphic to a solid tube $D^2\times I$ with an action by rotations around its axis $\{0\}\times I$. The common boundary of $P$ and $U_{RF}$ is a cylinder $C:=\sphere^1\times I$. We have a continuous curve $m$ on $C$ defined as $h_P(\overline{P^{*}}) \cap \partial U_{RF}$, where $\partial U_{RF}$ denotes the boundary of $U_{RF}$. Since $m$ is the restriction of $h_P$ to $C$, $(D^{2}\times \{t \})\cap m$ consists of exactly one point $m_t$ for each $t\in I$. Now, we connect $m_t$ with $(0,t)$ by a line segment. The resulting subset of $D^2\times I$ is a cross-section $h_{RF}: U_{RF}^{*} \rightarrow U_{RF}$. We observe that the restrictions of $h_{RF}$ and $h_P$ to $C$ coincide. 

 We extend $h_P$ to $U_{SE}^*$ similarly. By Theorem \ref{TH:SLICE_THEOREM} a small neighborhood of an orbit in $SE$ is equivariantly homeomorphic to $\sphere^1\times_{\bb{Z}_2} D^2$, the non-trivial $D^2$-bundle over $\sphere^1$. Consider $\Real P^2$ parametrized as in Example \ref{EX:STANDARD_ACTION} with the same circle action. Let $D_{\delta}^2\subset \Real P^2$ be the disk of radius $\delta<1$ centered at $[0,\theta]$. Then $\sphere^1\times_{\bb{Z}_2} D^2$ is equivariantly homeomorphic to $(\Real P^2\setminus D_{\delta}^2)\times I$ where the action on $I$ is trivial. Consequently, $U_{SE}$ is equivariantly homeomorphic to $(\Real P^2\times I) \setminus (D_{\delta}^2\times I)$. The common boundary between $U_{SE}$ and $P$ is again a cylinder $C$. As before, $h_P(\overline{P^{*}}) \cap \partial U_{SE}$ determines a continuous curve $l$ on $C$. Observe that each point $l_t$ of $l$ determines a unique point $([1,\theta_t], t )\in (\Real P^2\times I) \setminus (D_{\delta}^2\times I)$. Therefore, by joining $l_t$ with the corresponding point $([1,\theta_t], t )$, a cross-section $h_{SE}: U_{SE}^{*} \rightarrow U_{SE}$ is obtained. The restrictions of $h_{SE}$ and $h_P$ to $C$ coincide.    
  
 So far, we have a cross-section $h_0 : \overline{P^{*}} \cup U_{RF}^{*} \cup U_{SE}^{*} \longrightarrow \overline{P} \cup U_{RF} \cup U_{SE}$. We will extend $h_0$ to $B_{\varepsilon}(x^+)$. Recall that we assumed that $B_{\varepsilon}(x^+)$ is conical. Then by Theorem \ref{TH:SLICE_THEOREM}, $B_{\varepsilon}(x^+)$ is equivariantly homeomorphic to $K(\Real P^2)$ equipped with the standard circle action. Let $w$ be the curve given by $h_0(\overline{P^{*}} \cup U_{RF}^{*} \cup U_{SE}^{*}) \cap \partial B_{\varepsilon}(x^+)$. A cross-section to the action on  $B_{\varepsilon}(x^+)^*$ is obtained by repeating the curve $w$ on each level $\Real P^2 \times \{t\}$ of $B_{\varepsilon}(x^+)$. We extend $h_0$ to $B_{\varepsilon}(x^-)^*$ analogously. This concludes the proof of the theorem for $r=1$.
 
Suppose now that $r=k+1$. We assume that every effective, isometric circle action on a closed, connected $3$-space, with $2k$ topologically singular points, has a cross-section. Take two edges in $RF^{*}$ that are separated by a single edge in $SE^{*}$ and let $\gamma$ be a geodesic that connects them by arbitrary points. This separates $X^*$ into two subsets. Let $X^{*}_{2}$ be the subset of $X^{*}$ with two points in $SF^{*}$ and $X^{*}_{2k}$, the subset with $2k$ points in $SF^{*}$. Let $\pi:X\rightarrow X^*$ be the canonical projection. Then, $\pi^{-1}(\gamma)$ is an invariant $2$-sphere in $X$.  The invariant subspaces $X_2=\pi^{-1}(X^*_{2})$ and $X_{2k}=\pi^{-1}(X^{*}_{2k})$ of $X$, share $\pi^{-1}(\gamma)$ as boundary. Observe that the restriction of the action to $\pi^{-1}(\gamma)$ is equivalent to an orthogonal action \cite{M}. Let $B$ be a closed $3$-ball with the orthogonal $S^1$-action and let $B^{*}$ be its orbit space. The weights on $B^{*}$ are as follows. The interior of $B^{*}$ corresponds to principal isotropy. Its boundary is composed of two arcs, one of principal isotropy and the other one of fixed points. Denote the boundary arc of principal isotropy by $\tilde{\gamma}$. Let $F:\pi^{-1}(\gamma) \rightarrow \partial B$ be an equivariant homeomorphism and $f:\gamma \rightarrow \tilde{\gamma}$ a homeomorphism. The spaces $\tilde{X_2}:= X_2\cup_{F}B$ and $\tilde{X_{2k}}:=X_{2k}\cup_{F}B$ are naturally endowed with effective, isometric $S^1$-actions. Furthermore, their orbit spaces are isomorphic to the  topological surfaces $\tilde{X}^{*}_2 := X^{*}_2 \cup_{f}B^*$ and $\tilde{X}^{*}_{2k}:=X^{*}_{2k}\cup_{f}B^*$ respectively. We note that $\tilde{X_2}$ and $\tilde{X_{2k}}$ have $2$ and $2k$ topologically singular points respectively. By our induction hypothesis and the case $r=1$, there exist cross-sections $\tilde{h}_{2}: \tilde{X}^{*}_2 \rightarrow \tilde{X}_2$ and $\tilde{h}_{2k}: \tilde{X}^{*}_{2k} \rightarrow \tilde{X}_{2k}$.  We restrict $\tilde{h}_2$ and $\tilde{h}_{2k}$ to obtain cross-sections $h_2: X^{*}_2 \rightarrow X_2$ and $h_{2k}: X^{*}_{2k} \rightarrow X_{2k}$. We make $h_{2}$ and $h_{2k}$ coincide on $\pi^{-1}(\gamma)$ by means of an equivariant homeomorphism $\pi^{-1}(\gamma) \rightarrow \pi^{-1}(\gamma)$ to obtain a global cross-section $h : X^{*} \rightarrow X$.  
\end{proof}

We obtain the following Corollary to Theorem \ref{TH:CROSS_SECTION}.

\begin{corollary}
\label{COR:HOM_EQU}
Let $S^1$ act effectively and isometrically on two closed, connected Alexandrov $3$-spaces $X$ and $Y$ that are not manifolds. Assume that the actions have no exceptional orbits and that the orbit spaces $X^{*}$ and $Y^{*}$ are homeomorphic to $2$-disks. Then $X$ is weakly equivariantly homeomorphic to $Y$ if and only if $X^{*}$ is isomorphic to $Y^{*}$.   
\end{corollary}

\begin{proof}
Let $\pi_X:X\rightarrow X^*$ and $\pi_Y:Y\rightarrow Y^*$ be the canonical projections. By Theorem \ref{TH:CROSS_SECTION}, there exist cross-sections $h_X: X^{*} \rightarrow X$ and $h_Y: Y^{*} \rightarrow Y$. We let $\Psi: X^{*} \rightarrow Y^{*}$ be an isomorphism and define $\tilde{\Psi} = h_Y\circ\Psi\circ\pi_X$. The function $\tilde{\Psi}$ takes $h_X(X^{*})$ onto $h_Y(Y^{*})$ homeomorphically. The equivariance of $\tilde{\Psi}$ follows from the injectivity of $\tilde{\Psi}^{-1}$, noting that $\tilde{\Psi}^{-1}(\tilde{\Psi}(gx))=\tilde{\Psi}^{-1}(f(g)\tilde{\Psi}(x))$ for every $g\in S^1$, $x\in X$ and every automorphism $f$ of $S^1$. 

We construct a weakly equivariant homeomorphism $\Phi : X \rightarrow Y$ in the following manner. For each $x\in X$ there is a unique representation of the form $gh_X(x_0^*)$. Thus, $\Phi(gh_X(x_0^*)) := f(g) \tilde{\Psi}(h_X(x_0^*))$ is a weakly equivariant homeomorphism. Its inverse is obtained similarly by noting that $\Psi^{-1}(gh_Y(y_0^{*}))= f^{-1}(g)\tilde{\Psi}^{-1}(h_Y(y_0^*))$.
\end{proof}

\subsection{Equivariant connected sums}

 Let $S^1$ act effectively and isometrically on two closed, connected Alexandrov $3$-spaces $X_1$ and $X_2$. Let $RF_i$ denote the set of topologically regular fixed points of $X_i$, $i=1,2$, and assume that $RF_i\neq\emptyset$. We will define an \textit{equivariant connected sum} $X_1\#X_2$. This construction extends the equivariant connected sum for $3$-manifolds with circle actions defined by Raymond in Section 8 of \cite{R}. 
 
Let $x_i\in RF_i\subset X_i$ be a topologically regular fixed point.  Let $B_i$ be a small invariant open neighborhood of $x_i$ and let $\overline{B}_i$ be the closure of $B_i$. We may assume that $B_i$ is small enough so that $\overline{B}_i\cap SF_i=\overline{B}_i\cap E_i=\overline{B}_i\cap SE_i=\emptyset$, i.e., $\overline{B}_i$ contains only topologically regular fixed points and points with trivial isotropy. Since the point $x_i$ is topologically regular we may assume that $\overline{B}_i$ is equivariantly homeomorphic to a closed $3$-ball with an effective and isometric circle action. By Theorem \ref{TH:SLICE_THEOREM}, the $S^1$-action on $\overline{B}_i$ must be equivariantly homeomorphic to the cone over an $S^1$-action on $\partial \overline{B}_i \cong \mathbb{S}^2$. Since every topological action of $S^1$ on $\mathbb{S}^2$ is equivariantly homeomorphic to a linear action (see \cite{M}, \cite{N}), it follows that the action of $S^1$ on $\mathbb{S}^2$ is equivariantly homeomorphic to the action given by rotations of a round $2$-sphere around an axis. Hence the $S^1$-action on $\overline{B}_i$ is linear and fixes an arc joining two points in $\partial \overline{B}_i$ (corresponding to the fixed points of the $S^1$-action on $\partial \overline{B}_i$) and whose interior is contained in $B_i$. In particular, this arc contains the point $x_i$ (see Raymond \cite{R} and Orlik and Raymond \cite{OR} for a purely topological proof of the fact that an $S^1$-action on $\overline{B}_i$ must be linear).
 
We now let $X'_i:=X_i\setminus B_i$ and glue $X'_1$ to $X'_2$ by means of an equivariant homeomorphism. The equivariant homeomorphism is required to be orientation reversing if the $X_i$ are orientable. We obtain a topological space $X$ carrying an effective $S^1$-action by homeomorphisms.

 \begin{definition} The space $X$ constructed from $X_1$ and $X_2$ in the preceding paragraph is the \textit{equivariant connected sum} of $X_1$ and $X_2$ and it is denoted by $X_1\# X_2$.
 \end{definition}

This construction can be iterated to obtain an equivariant connected sum of any finite number of connected summands. Let $X_1,\ldots, X_n$ be closed, connected Alexandrov $3$-spaces on which $S^1$ acts effectively, isometrically with topologically regular fixed points. We now indicate how to equip an equivariant connected sum $X_1\#\cdots \#X_n$ with an Alexandrov metric such that the $S^1$-action on $X$, induced by each $X_i$, is isometric.

\begin{proposition}Let $S^1$ act effectively and isometrically on $n\geq 2$ closed, connected Alexandrov $3$-spaces $X_i$, $i=1,\ldots, n$, and let $X$ be the equivariant connected sum $X_1\#\cdots \#X_n$. Then there exists an Alexandrov metric on $X$. Furthermore, this metric is invariant with respect to the effective and isometric circle action on $X$ induced by the circle actions on each $X_i$.  
\end{proposition}

\begin{proof}
  If $X$ is a topological manifold, then by Theorem 6 of \cite{R} there is a differentiable structure on $X$ such that the $S^1$-action is equivalent to an action by diffeomorphisms. Since $X$ is compact, the circle action on $X$ induced by the $X_i$ is proper. Therefore, there is a Riemannian metric $g$ on $X$ such that the elements of $S^1$ are isometries with respect to $g$. The metric on $X$ induced by $g$ is an Alexandrov metric since $X$ is compact. 
 
 Now, assume that $X$ is not a topological manifold. We consider the \textit{ramified orientable double cover} $\tilde{X}$ of $X$. In order to keep the presentation self-contained, we will describe $\tilde{X}$ by means of elementary tools. However, we refer the reader to \cite[Theorem 2.4]{HS} for a more general construction. We proceed with the description of $\tilde{X}$. We remove from $X$ disjoint, open conical neighborhoods of each topologically singular point, obtaining a non-orientable topological $3$-manifold $X_0$ with boundary an even number of copies of $\Real P^2$. The orientable double cover $\tilde{X}_0$ of $X_0$ is an orientable, topological $3$-manifold with boundary. By Theorem 9.1 in \cite{Br} we can lift the $S^1$-action on $X_0$ to obtain an effective $S^1$-action by homeomorphisms on $\tilde{X}_0$. We also note that the $S^1$ acting on $\tilde{X}_0$ is a $2$-fold covering of the $S^1$ acting on $X_0$. We let $\xi: S^1 \rightarrow S^1$ be said covering. We need some technical facts. Let $\iota$ be the natural involution on $\tilde{X}_0$ and $\rho: \tilde{X}_0 \rightarrow \tilde{X}_0/\iota$, the canonical projection. First we observe that, since $\rho$ is 2-sheeted, then $\mathrm{Aut}(\rho)$, the group of deck transformations of $\rho$, is isomorphic to $\mathbb{Z}_2$. We also observe that $\iota$ is an element of $\mathrm{Aut}(\rho)$. By Theorem 9.1 in \cite{Br}, the kernel of $\xi$ is a subgroup of $\mathrm{Aut}(\rho)$. Therefore $\iota$ coincides with the function $\{e^{\pi}\}\times \tilde{X}_0 \rightarrow \tilde{X_0}$, the restriction of the $S^1$ action. Since each boundary component of $\tilde{X}_0$ is a $2$-sphere, the restriction of the $S^1$-action is orthogonal \cite{M}. Then we can extend $\iota$ and the $S^1$ action to $3$-balls to obtain a closed topological $3$-manifold $\tilde{X}$. Note that $\tilde{X}/\iota$ is homeomorphic to $X$. Now, we apply Theorem 6 of \cite{R} to conclude that the circle action on $\tilde{X}$ is equivalent to an action by diffeomorphisms. This also implies that the action of $\iota$ on $\tilde{X}$ is equivalent to an action by diffeomorphisms. Furthermore, the smoothed actions of $\iota$ and $S^1$ commute. Now we let $\tilde{g}$ be a Riemannian metric on $\tilde{X}$ such that the $S^1$ and $\iota$ actions are isometric. Then, $(\tilde{X},\tilde{g})/\iota$ is a Riemannian orbifold with an effective, isometric $S^1$-action equivalent to that induced by the $X_i$. 
\end{proof}

In particular, we have the following observation. Let $\sphere^2$ be the unit round $2$-sphere and consider $\sphere^3 = \Susp(\sphere^2)$ with the standard spherical suspension metric. Let $\iota:\sphere^3 \rightarrow \sphere^3$ be given by the antipodal map on each level of the suspension. Then $(\Susp(\Real P^2), d_0)$ is isometric to the quotient of the unit round $\sphere^3$ by $\iota$. Therefore, $(\Susp(\Real P^2),d_0)$ has the structure of a Riemannian orbifold with curvature bounded below and has an effective, isometric $S^1$-action. Thus, the connected sum of finitely many copies of $\Susp(\Real P^2)$ has an Alexandrov metric and the $S^1$-action determined by taking the standard action on every summand is effective and isometric. Thus, we obtain the following corollary.

\begin{corollary}
Let $S^1$ act effectively and isometrically on a closed, connected Alexandrov $3$-space $X$ with $2r$ topologically singular points, $r\geq 1$. If there are no exceptional orbits and $X^{*}$ is homeomorphic to a $2$-disk, then $X$ is weakly equivariantly homeomorphic to the equivariant connected sum of $r$ copies of $\Susp(\Real P^2)$ equipped with the standard circle action. Consequently, the only effective, isometric circle action on $\Susp(\Real P^2)$ is the standard action, up to weakly equivariant homeomorphism.
\end{corollary}

\begin{remark}
We can avoid the use of the Slice Theorem for Alexandrov spaces (Theorem \ref{TH:SLICE_THEOREM}) in our present setting as follows. Observe that an invariant conical neighborhood of $x\in SF$ is homeomorphic to $K(\Real P^2)$ \cite{Pe}. There exists a topological  involution $\iota$ of the $3$-ball $B$, such that $B/\iota$ is homeomorphic to $K(\Real P^2)$. By results of Hirsch and Smale \cite{HiS} and Livesay \cite{L}, the action of the involution must be orthogonal. Hence this action is the cone of the action induced by the antipodal map on $\mathbb{S}^2$.  On the other hand, the action of $S^1$ on $B$ is equivalent to an orthogonal action \cite{M}. Since these actions on $B$ commute, we have that the action of $S^1$ on $K(\Real P^2)$ is the cone of the standard action on $\mathbb{R}P^2$. For a more general instance of this construction in Alexandrov geometry, see for example, Section 2 of \cite{GW}, Section 2 of \cite{HS} or Lemmas 1.6 and 1.7 of \cite{GG1}.  
\end{remark}

\section{Topological and equivariant classification in the general case}
\label{S:CLASS_GENERAL}

In this section we will prove Theorem \ref{teoprinc}. To this end we will consider effective, isometric circle actions on $X$ with $F\neq \emptyset$ having no restrictions on the orbit space.  The proof will follow along the lines of the proof in the manifold case (see \cite{O,OR, R}). It consists of first obtaining a cross-section to the action everywhere except for a tubular neighborhood of $E$ and then noting that one can define a global weakly equivariant homeomorphism between spaces with isomorphic orbit spaces. This cross-section will be constructed by using the more restrictive case considered in the previous section.  Then, one must use the fact that, just as in the manifold case,  there is essentially a unique way to glue a tubular neighborhood of an exceptional orbit once the restriction of a cross-section to the boundary and the Seifert invariants of the orbit are given.   

\begin{proposition}
\label{PROP:CROSS_SEC_GRL}
Let $S^1$ act effectively and isometrically on a closed, connected Alexandrov $3$-space $X$ with $E=\emptyset$ and $F\neq\emptyset$. Then there exists a cross-section to the action.  
\end{proposition} 

\begin{proof} 
Let $( b; (\varepsilon, g, f,t); (r_1,r_2, \ldots, r_s))$ be the invariants of the action. If $X$ has no topologically singular points then $X$ is a topological manifold, and the result is Lemma 2 in \cite{R}.
Thus, we first assume that $s=1$ and denote $r_1=r$. Consider the topological surface $M^{*}$ weighted by the tuple $\left(  b; (\varepsilon, g, f+1,t) \right)$. By Theorem 4 in \cite{R}, there is an effective, isometric $S^1$-action on a closed $3$-manifold $M$ with $M^*$ as the orbit space. Furthermore, $M$ is unique up to weakly equivariant homeomorphism. Since $f+1>0$, there is at least one circle $C$ of fixed points on $M$. Consider an arc $I$ contained in $C$. Let $U$ be a small tubular neighborhood of $I$. Now, let $\tilde{X}$ be the equivariant connected sum of $r/2$ copies of $\Susp(\Real P^2)$. Take an edge of fixed points in $\tilde{X}$ that has topologically singular points as endpoints and let $\tilde{I}$ be a subarc of said edge consisting of topologically regular points only. Consider a small tubular neighborhood $\tilde{U}$ of $\tilde{I}$. By Theorem \ref{TH:SLICE_THEOREM} the restricted actions on $U$ and $\tilde{U}$ are equivalent to an action by rotations with respect to $I$ and $\tilde{I}$ respectively. Thus, there is an equivariant homeomorphism $\varphi : \tilde{U} \rightarrow U$. We now take the equivariant connected sum $M\# \tilde{X}= M\cup_{\varphi} \tilde{X}$. We then have that $(M\# \tilde{X})^{*}$ is isomorphic to $M^{*} \cup \tilde{X}^{*}$, gluing along $\tilde{U}^*$ and $U^{*}$. Observe that $(M\# \tilde{X})^{*}$ is also isomorphic to $X^{*}$. The subsets $\pi^{-1}(M^{*})$ and $\pi^{-1}(\tilde{X}^{*})$ are invariant in $X$. Moreover, $\pi^{-1}(M^{*}) \cap SF = \emptyset$, and therefore, $\pi^{-1}(M^{*})$ is a topological $3$-manifold. We conclude that $M$ is weakly equivariantly homeomorphic to $\pi^{-1}(M^{*})$.
  
By Lemma 2 in \cite{R} and Theorem \ref{TH:CROSS_SECTION}, we have cross-sections $h_1:M^{*} \rightarrow M$ and $h_2:\tilde{X}^{*} \rightarrow \tilde{X}$. As mentioned in the preceding paragraph, the restricted actions on $\tilde{U}$ and $U$ are equivalent to an orthogonal action on a $3$-ball $B$. This action has a canonical cross-section $J\subset B^{3}$. We take equivariant homeomorphisms $\varphi_1: U\rightarrow B$ and $\varphi_2: B\rightarrow \tilde{U}$ such that $\varphi_1$ and $\varphi_2$ take $h_1(M^{*})$ and $J$ homeomorphically onto $J$ and $h_2(\tilde{X}^{*})$, respectively. Therefore, the equivariant homeomorphism $\varphi_2\circ\varphi_1$ makes $h_1$ and $h_2$ agree. Then, we obtain a global cross-section $h:X^*\rightarrow X$. This concludes the proof of the Proposition for $s=1$.    

For the general case, we let $M^*$ be weighted by $\left(b; \left( \varepsilon, g, f+s, t \right) \right)$. We use Theorem 4 in \cite{R} again to obtain the unique closed $3$-manifold $M$. In this case, $M$ has at least $s$ circles of fixed points. We let $\tilde{X}_i$ be the equivariant connected sum of $r_i/2$ copies of $\Susp(\Real P^2)$, for each $i=1,2,\ldots, s$. Then $X^{*}$ is isomorphic to $M^*\cup\left( \cup_{i=1}^s \tilde{X}_i^* \right)$ , where the unions are taken along adequate invariant neighborhoods of the fixed point components. Applying the procedure made in the case $s=1$ for each circle of fixed points, we get cross-sections $M^{*}\rightarrow M$ and $\tilde{X_i}^{*} \rightarrow \tilde{X}_i$. We glue these cross-sections to obtain a global cross-section $X^{*}\rightarrow X$.
\end{proof}

\begin{proof}[Proof of Theorem \ref{teoprinc}] 
If $X$ has no topologically singular points, then the result reduces to Corollaries 2a, 2b and Theorems 1 and 4 in \cite{R}. Therefore we now assume that $X$ has topologically singular points.

We will prove (2) first. Let $X_0$ denote the complement in $X$ of a sufficiently small tubular neighborhood of $E$, so that $X_0^{*}$ is homeomorphic to $X^{*}$ with $n$ disks removed. By Proposition \ref{PROP:CROSS_SEC_GRL} there is a cross-section $X_0^{*} \rightarrow X_0$. Let $Y$ be a closed, connected Alexandrov $3$-space with an effective, isometric $S^1$-action such that $X_0^*$ and $Y_0^*$ are isomorphic. By replicating the argument in Corollary \ref{COR:HOM_EQU}, we obtain a weakly equivariant homeomorphism $X_0 \rightarrow Y_0$. In the notation of Proposition \ref{PROP:CROSS_SEC_GRL}, $X_0^*$ is isomorphic to the orbit space $M_0^*\cup\left( \cup_{i=1}^s \tilde{X}_i^* \right)$, where $M_0$ has no exceptional orbits and has $n$ torus boundary components. Therefore, there exists a weakly equivariant homeomorphism $\varphi: X_0 \rightarrow M_0\#\Susp(\bb{R}P^2)\#\ldots \#\Susp(\bb{R}P^2)$, where the connected sum has $s$ summands equal to $\Susp(\bb{R}P^2)$. We now observe that Lemma 6 and Theorems 2a and 2b in \cite{R} admit straightforward generalizations to the Alexandrov setting by using our Theorem \ref{TH:CROSS_SECTION}. Hence, as in the manifold case, $\varphi$ can be extended to a weakly equivariant homeomorphism between $X$ and $Y$. 

We now prove (1). The restriction of the action to the manifold $M_0$ appearing on the previous decomposition of $X$ is uniquely determined, up to weakly equivariant homeomorphism, by Theorem 4 in \cite{R}. On the other hand, the restriction of the action to $\Susp(\bb{R}P^2)\#\ldots \#\Susp(\bb{R}P^2)$ is an equivariant connected sum of standard actions.  Therefore, the action is determined by the number of pairs of topologically singular points on each boundary component of $X^*$.
\end{proof}

\begin{remark}
Recall that $s$ is the number of boundary components of $X^*$ which intersect $SF^*$. The set of invariants $\left(b; (\varepsilon, g, f, t); \{ \alpha_i, \beta_i \}_{i=1}^{n}; s \right)$ provides enough information to obtain the topological decomposition of $X$. However, by excluding the $s$-tuple $(r_1,r_2, \ldots, r_s)$, the remaining invariants are incapable of detecting some inequivalent actions on $X$ if the number of topologically singular points is greater than $2$, as the following example shows. 
\end{remark}

\begin{example}
\label{EX:INEQU_ACTIONS}
Let $M= \sphere^2 \times \sphere^1$, regarding $\sphere^2$ as a subset of $\mathbb{C}\times \Real$. Consider the $S^1$-action on $M$ that sends each $(z,t,w)\in \sphere^2 \times \sphere^1$ to $(gz,t,w)$, where $g\in S^1$ and $gz$ is the complex multiplication. Let $X_1$ and $X_2$ denote two copies of $\Susp(\Real P^2)$ equipped with the standard circle action. The equivariant connected sum $X= M \# X_1 \# X_2$ is realized by choosing small tubular neighborhoods of subarcs of the components of fixed points of the connected summands. Observe that $M$ has two circles of fixed points, namely, $C_1=\{(0,1)\}\times \sphere^1$ and $C_2=\{(0,-1)\}\times \sphere^1$. Note that each $X_i$ has one fixed point component, which we will denote by $F_1$ and $F_2$, respectively. Therefore the choices involved in the construction of the equivariant connected sum can be done in two ways. On the one hand, we can glue $F_i$ to subarcs of $C_1$, obtaining an orbit space $X^*$ with $C_2^*\cap SF^*=\emptyset$. On the other hand, we can glue $F_1$ with a subarc in $C_1$, and $F_2$ with a subarc in $C_2$. In the resulting orbit space, $C_i\cap SF^*\neq \emptyset$. These actions on $X$ cannot be equivalent since their orbit spaces are not isomorphic.  
\end{example}

\begin{remark} 
\label{R:COUNT_ACTIONS}
Example \ref{EX:INEQU_ACTIONS} illustrates how we count the number of inequivalent effective, isometric circle actions on $X$. By Theorem \ref{teoprinc}, $X$ is weakly equivariantly homeomorphic to $M\#Y$, where $Y$ is an equivariant connected sum of $s$ copies of $\Susp(\Real P^2)$ and $M$ is a closed $3$-manifold. Since $Y$ can only contribute standard circle actions, we only have to choose how to arrange $s$ pairs of topologically singular points along the boundary components of mixed isotropy in $X^{*}$. Following the notation of Theorem \ref{teoprinc}, if $s>0$, then there are $\genfrac{(}{)}{0pt}{}{r}{s}$ inequivalent effective, isometric circle actions for each effective, isometric circle action on $M$.
\end{remark}

\section{Borel conjecture for Alexandrov spaces with circle symmetry}
\label{S:Borel}

The simplest examples of Alexandrov spaces which are not manifolds occur within the class of closed Alexandrov $3$-spaces, since they are topological manifolds except for a finite number of isolated points. This property suggests that some results for closed $3$-manifolds may have suitable generalizations to Alexandrov $3$-spaces. 

Recall that a topological space $X$ is said to be \textit{aspherical} if its homotopy groups $\pi_{q}(X)$ are trivial for $q>1$. One result concerning the class of aspherical $n$-manifolds is the \emph{Borel conjecture}. It asserts that if two closed, aspherical $n$-manifolds, are homotopy equivalent, then they are homeomorphic. The proof of this conjecture in the $3$-dimensional case is a consequence of Perelman's proof of Thurston's Geometrization Conjecture (see~\cite{P}). It is natural to ask if this conjecture still holds for closed, connected Alexandrov $3$-spaces, particularly for those with symmetry. The explicit topological decomposition in Theorem \ref{teoprinc} allows us to investigate the homotopy groups of these spaces and to prove the following analog of the Borel conjecture.    

\begin{theorem}[Borel conjecture for Alexandrov spaces with circle symmetry] If two aspherical, closed, connected Alexandrov $3$-spaces on which $S^1$ acts effectively and isometrically are homotopy equivalent, then they are homeomorphic.  
\end{theorem}

\begin{proof}
Our proof will consist of showing that the only aspherical, closed, connected Alexandrov $3$-spaces admitting an effective, isometric $S^1$-action are topological manifolds. As pointed out before, the Borel Conjecture holds for closed, aspherical $3$-manifolds \cite{P}.  

We begin by noting that $\Susp(\Real P^2)$ is not aspherical: a combination of the suspension isomorphism and the Hurewicz Theorem yields that $\pi_2(\Susp(\Real P^2))\cong \mathbb{Z}_2$. We will now prove that a connected sum of suspensions of $\Real P^2$ is not aspherical. We use homology with $\bb{Z}$ coefficients. Let $X = \Susp(\Real P^2)\# \Susp(\Real P^2)$ and $B\subset \Susp(\Real P^2)$ be an invariant $3$-ball used for the construction of the equivariant connected sum. By the Seifert-Van Kampen Theorem, $X$ is simply-connected. Therefore, by the Hurewicz Theorem, $\pi_2(X)\cong H_2(X)$. Observe that $\partial B\cong\sphere^2$ is a deformation retract of a neighborhood in $X$. Hence, by Proposition 19.36 of \cite{GH},  $H_2(X,\sphere^2)\cong H_2(\Susp(\Real P^2)\vee \Susp(\Real P^2))$. Also note that the distinguished point in $\Susp(\Real P^2)\vee \Susp(\Real P^2)$ is a deformation retract of neighborhoods $U_1$ in the first $\Susp(\Real P^2)$ and $U_2$ in the second $\Susp(\Real P^2)$. Then, by applying the Mayer-Vietoris sequence to the decomposition $(\Susp(\Real P^2)\cup U_2, U_1\cup \Susp(\Real P^2) )$, we obtain that $H_2(\Susp(\Real P^2)\vee \Susp(\Real P^2)) \cong \mathbb{Z}_2 \oplus \mathbb{Z}_2$. Hence, the exact sequence of the pair $(X,\sphere^2)$ takes the following form
\[
 H_2(X) \rightarrow H_2(\Susp(\Real P^2)\vee \Susp(\Real P^2)) \rightarrow H_1(\sphere^2).  
\]
  Therefore, we have a surjection $H_2(X) \rightarrow \mathbb{Z}_2 \oplus \mathbb{Z}_2$. It follows that $H_2(X)\neq 0$ and, by induction, that no connected sum of finitely many  suspensions of $\Real P^2$ is aspherical.

Let $Y$ be an aspherical, closed, connected Alexandrov $3$-space on which $S^1$ acts effectively and isometrically. By Theorem \ref{teoprinc}, $Y$ is homeomorphic to $M\#X$, where $X$ is a connected sum of finitely many copies of $\Susp(\Real P^2)$ and $M$ is a closed $3$-manifold. Let $\varphi: M\#X \rightarrow M\vee X$ be the function that collapses the $\sphere^2$ used to construct the connected sum to a point. The pair $(M\# X, \sphere^2)$ is $0$-connected, therefore by Proposition 4.28 in \cite{H}, $\varphi$ is $2$-connected. Now, we lift $\varphi$ to the universal covers to get a $2$-connected map $\tilde{\varphi}: \widetilde{M\#X} \rightarrow \widetilde{M\vee X}$. Since we assumed $Y$ to be aspherical, $\pi_k(M\# X)=0$ for $k> 1$. Therefore, $\pi_k(\widetilde{M\#X})=0$ for $k\geq 1$. The map $\tilde{\varphi}$ and the Hurewicz Theorem yield that $\pi_2(\widetilde{M\vee X})=H_2(\widetilde{M\vee X})=0$.\\
Denote the projection of the universal cover of $M\vee X$ by $p$. Observe that $p^{-1}(M)\cap p^{-1}(X)=p^{-1}(\{pt\})$ is a discrete set and that $p^{-1}(M)\cup p^{-1}(X) = \widetilde{M\vee X}$. Using the Mayer-Vietoris sequence for this decomposition we obtain that $H_2(p^{-1}(M))\oplus H_2(p^{-1}(X))\cong 0$. The preimage of $X$ is a disjoint union of copies of the universal cover of $X$. Since $X$ is simply-connected, $p^{-1}(X)$ is a disjoint union of copies of $X$. This is a contradiction, since we proved that $H_2(X)\neq 0$.      
\end{proof}


\end{document}